\numberwithin{equation}{section}
\newtheorem{theorem}{Theorem}[section]
\def\ps@pprintTitle{%
\let\@oddhead\@empty
\let\@evenhead\@empty
\let\@oddfoot\@empty
\let\@evenfoot\@oddfoot
}
\begin{document}
\begin{frontmatter}
\title{Convergence analysis of fixed stress split iterative scheme for small strain anisotropic poroelastoplasticity: a primer}
\author[csm]{Saumik Dana}
\ead{saumik@utexas.edu}
\author[csm]{Mary. F. Wheeler}
\ead{mfw@ices.utexas.edu}

\address[csm]{Center for Subsurface Modeling, Institute for Computational Engineering and Sciences, UT Austin, Austin, TX 78712}
\begin{abstract}
This work serves as a primer to our efforts in arriving at convergence estimates for the fixed stress split iterative scheme for single phase flow coupled with small strain anisotropic poroelastoplasticity. The fixed stress split iterative scheme solves the flow subproblem with stress tensor fixed using a mixed finite element method, followed by the poromechanics subproblem using a conforming Galerkin method in every coupling iteration at each time step. The coupling iterations are repeated until convergence and Backward Euler is employed for time marching. The convergence analysis is based on studying the equations satisfied by the difference of iterates to show that the iterative scheme is contractive. 
\end{abstract}
\begin{keyword}  Fixed stress split iterative scheme; Anisotropic poroelastoplasticity
\end{keyword}
\end{frontmatter}
\section{Introduction}
%
This work follows up on our previous work (\citet{danacg}), where we arrived at convergence estimates for the case of anisotropic poroelasticity with tensor Biot parameter.
\subsection{Preliminaries}
Given a bounded convex domain $\Omega\subset \mathbb{R}^3$, we use $P_k(\Omega)$ to represent the restriction of the space of polynomials of degree less that or equal to $k$ to $\Omega$ and $Q_1(\Omega)$ to denote the space of trilinears on $\Omega$. For the sake of convenience, we discard the differential in the integration of any scalar field $\chi$ over $\Omega$ as follows
\begin{align*}
\int\limits_{\Omega}\chi(\mathbf{x}) \equiv \int\limits_{\Omega}\chi(\mathbf{x}) \,dV\qquad (\forall \mathbf{x}\in \Omega)
\end{align*}
Sobolev spaces are based on the space of square integrable functions on $\Omega$ given by
\begin{align*}
L^2(\Omega)\equiv \big\{\theta:\Vert \theta\Vert_{\Omega}^2:=\int\limits_{\Omega}\vert\theta\vert^2 < +\infty \big\},
\end{align*}
The inner product of two second order tensors $\mathbf{S}$ and $\mathbf{T}$ is given by (see \citet{gurtin})
\begin{align*}
\tag{$i,j=1,2,3$}
\mathbf{S}:\mathbf{T}=S_{ij}T_{ij} 
\end{align*}
A fourth order tensor is a linear transformation of a second order tensor to a second order tensor in the following manner (see \citet{gurtin})
\begin{align*}
\tag{$i,j,k,l=1,2,3$}
\mathbb{P}\mathbf{S}=\mathbf{T}\rightarrow \mathbb{P}_{ijkl}S_{kl}=T_{ij}
\end{align*}
The dyadic product $\otimes$ of two second order tensors $\mathbf{S}$ and $\mathbf{T}$ is given by (see \citet{gurtin})
\begin{align*}
\mathbb{P}=\mathbf{S}\otimes \mathbf{T}\rightarrow \mathbb{P}_{ijkl}=S_{ij}T_{kl}
\end{align*} 
\section{Flow model}\label{flowmodel1} 
Let the boundary $\partial \Omega=\Gamma_D^f \cup \Gamma_N^f$ where $\Gamma_D^f$ is the Dirichlet boundary and $\Gamma_N^f$ is the Neumann boundary. The fluid mass conservation equation \eqref{massagain1} in the presence of deformable and anisotropic porous medium with the Darcy law \eqref{darcyagain1} and linear pressure dependence of density \eqref{compressible1} with boundary conditions \eqref{bcagain11} and initial conditions \eqref{flowendagain1} is
\begin{align}
\label{massagain1}
&\frac{\partial \zeta}{\partial t}+\nabla \cdot \mathbf{z}=q\\
\label{darcyagain1}
&\mathbf{z}=-\frac{\mathbf{K}}{\mu}(\nabla p-\rho_0 \mathbf{g})=-\boldsymbol{\kappa}(\nabla p-\rho_0 \mathbf{g})\\
\label{compressible1}
&\rho=\rho_0(1+c\,(p-p_0))\\
\label{bcagain11}
&p=g \,\, \mathrm{on}\,\,\Gamma_D^f \times (0,T],\,\,\mathbf{z}\cdot\mathbf{n}=0 \,\, \mathrm{on}\,\,\Gamma_N^f \times (0,T]
\\
\nonumber
&p(\mathbf{x},0)=p_0(\mathbf{x}),\,\,\rho(\mathbf{x},0)=\rho_0(\mathbf{x}),\,\, \phi(\mathbf{x},0)=\phi_0(\mathbf{x})\\
\label{flowendagain1}
&(\forall \mathbf{x}\in \Omega)
\end{align}
where $p:\Omega \times (0,T]\rightarrow \mathbb{R}$ is the fluid pressure, $\mathbf{z}:\Omega \times (0,T]\rightarrow \mathbb{R}^3$ is the fluid flux, $\zeta$ is the increment in fluid content\footnote{\citet{biot-energy} defines the increment in fluid content as the measure of the amount of fluid which has flowed in and out of a given element attached to the solid frame}, $\mathbf{n}$ is the unit outward normal on $\Gamma_N^f$, $q$ is the source or sink term, $\mathbf{K}$ is the uniformly symmetric positive definite absolute permeability tensor, $\mu$ is the fluid viscosity, $\rho_0$ is a reference density, $\phi$ is the porosity, $\boldsymbol{\kappa}=\frac{\mathbf{K}}{\mu}$ is a measure of the hydraulic conductivity of the pore fluid, $c$ is the fluid compressibility and $T>0$ is the time interval. 
\section{Poromechanics model}\label{poromodel1}
One of the chief hypotheses underlying the small strain theory of elastoplasticity is the decomposition
of the total strain, $\boldsymbol{\epsilon}$, into the sum of an elastic (or reversible) component $\boldsymbol{\epsilon}^e$, and a plastic (or permanent) component, $\boldsymbol{\epsilon}^p$,
\begin{align*}
\boldsymbol{\epsilon}=\boldsymbol{\epsilon}^e+\boldsymbol{\epsilon}^p
\end{align*}
\par 
Let the boundary $\partial \Omega=\Gamma_D^p \cup \Gamma_N^p$ where $\Gamma_D^p$ is the Dirichlet boundary and $\Gamma_N^p$ is the Neumann boundary. Linear momentum balance for the anisotropic porous solid in the quasi-static limit of interest \eqref{mechstart} with small strain assumption \eqref{smallstrain} with boundary conditions \eqref{bc1} and initial condition \eqref{ic1} is
\begin{align}
\label{mechstart}
&\nabla\cdot \boldsymbol{\sigma}+\mathbf{f}=\mathbf{0}\\
\label{bforce}
&\mathbf{f}=\rho \phi\mathbf{g} + 
\rho_r(1-\phi)\mathbf{g}\\
\label{smallstrain}
&\boldsymbol{\epsilon}(\mathbf{u})=\frac{1}{2}(\nabla \mathbf{u} + (\nabla \mathbf{u})^T)\\
\label{bc1}
&\mathbf{u}\cdot\mathbf{n}_1=0\,\, \mathrm{on}\,\,\Gamma_D^p \times [0,T],\,\,
\boldsymbol{\sigma}^T\mathbf{n}_2=\mathbf{t}\,\, \mathrm{on}\,\,\Gamma_N^p \times [0,T]\\
\label{ic1}
&\mathbf{u}(\mathbf{x},0)=\mathbf{0}\qquad \forall \mathbf{x}\in \Omega
\end{align}
where $\mathbf{u}:\Omega \times [0,T]\rightarrow \mathbb{R}^3$ is the solid displacement, $\rho_r$ is the rock density, $\mathbf{f}$ is the body force per unit volume, $\mathbf{n}_1$ is the unit outward normal to $\Gamma_D^p$,  $\mathbf{n}_2$ is the unit outward normal to $\Gamma_N^p$, $\mathbf{t}$ is the traction specified on $\Gamma_N^p$, $\boldsymbol{\epsilon}$ is the strain tensor, $\boldsymbol{\sigma}$ is the Cauchy stress tensor given by the generalized Hooke's law
\begin{align}
\label{constitutive}
\boldsymbol{\sigma}=\mathbb{D}\boldsymbol{\epsilon}^e
-\boldsymbol{\alpha} p\equiv \mathbb{D}^{ep}\boldsymbol{\epsilon}
-\boldsymbol{\alpha} p
\end{align}
where $\mathbb{D}$ is the fourth order anisotropic elasticity tensor, $\boldsymbol{\alpha}$ is the Biot tensor and $\mathbb{D}^{ep}$ is the elastoplastic tangent operator (see \citet{neto}) 
and $\hat{\boldsymbol{\sigma}}=\mathbb{D} \boldsymbol{\epsilon}^e\equiv \mathbb{D}^{ep} \boldsymbol{\epsilon}$ is the effective stress. The inverse of the generalized Hooke's law \eqref{constitutive} is given by
\begin{align}
\label{invconstitutive}
&\boldsymbol{\epsilon}=\mathbb{D}^{ep^{-1}}(\boldsymbol{\sigma}+\boldsymbol{\alpha}p)=\mathbb{D}^{ep^{-1}}\boldsymbol{\sigma}+\frac{C}{3}\mathbf{B}p
\end{align}
where $C(>0)$ is a generalized Hooke's law constant and $\mathbf{B}$ is a generalization of the Skempton pore pressure coefficient $B$ (see \citet{skempton-1954}) for anisotropic poroelastoplasticity, and is given by
\begin{align}
\label{skempton}
\mathbf{B}\equiv \frac{3}{C}\mathbb{D}^{ep^{-1}}\boldsymbol{\alpha}
\end{align}
\subsection{Increment in fluid content}
The increment in fluid content $\zeta$ is given by (see \citet{coussy})
\begin{align}
\label{fluidcontent}
\zeta=\frac{1}{M}p+\boldsymbol{\alpha}:\boldsymbol{\epsilon}^e+\phi^p \equiv Cp+\frac{1}{3}C\mathbf{B}:\boldsymbol{\sigma}+\phi^p
\end{align}
where $M(>0)$ is a generalization of the Biot modulus (see \citet{biot3}) for anisotropic poroelasticity and $\phi^p$ is a plastic porosity (see \citet{coussy}).
%
\section{Statement of contraction of the fixed stress split scheme for small strain anisotropic poroelastoplasticity with Biot tensor}
We use the notations $(\cdot)^{n+1}$ for any quantity $(\cdot)$ evaluated at time level $n+1$, $(\cdot)^{m,n+1}$ for any quantity $(\cdot)$ evaluated at the $m^{th}$ coupling iteration at time level $n+1$, $\delta^{(m)}_f (\cdot)$ for the change in the quantity $(\cdot)$ during the flow solve over the $(m+1)^{th}$ coupling iteration at any time level and $\delta^{(m)} (\cdot)$ for the change in the quantity $(\cdot)$ over the $(m+1)^{th}$ coupling iteration at any time level. Let $\mathscr{T}_h$ be finite element partition of $\Omega$ consisting of distorted hexahedral elements $E$ where $h=\max\limits_{E\in\mathscr{T}_h}diam(E)$. The details of the finite element mapping are given in \citet{dana-2018}. 
\subsection{Discrete variational statements for the flow subproblem in terms of coupling iteration differences}\label{discreteflow}
Before arriving at the discrete variational statement of the flow model, we impose the fixed stress constraint on the strong form of the mass conservation equation \eqref{massagain1}. In lieu of \eqref{fluidcontent}, we write \eqref{massagain1} as
\begin{align}
\nonumber
&\frac{\partial}{\partial t}(Cp+\frac{C}{3}\mathbf{B}:\boldsymbol{\sigma}+\phi^p)+\nabla \cdot \mathbf{z}=q\\
\label{ek}
&C\frac{\partial p}{\partial t} +\nabla \cdot \mathbf{z}=q-\frac{C}{3}\mathbf{B}:\frac{\partial \boldsymbol{\sigma}}{\partial t}-\frac{\partial \phi^p}{\partial t}
\end{align}
Using backward Euler in time, the discrete in time form of \eqref{ek} for the $m^{th}$ coupling iteration in the $(n+1)^{th}$ time step is written as
\begin{align*}
&C\frac{1}{\Delta t}(p^{m,n+1}-p^n) +\nabla \cdot \mathbf{z}^{m,n+1}\\
&=q^{n+1}-\frac{1}{\Delta t}\frac{C}{3}\mathbf{B}:(\boldsymbol{\sigma}^{m,n+1}-\boldsymbol{\sigma}^n)-\frac{1}{\Delta t}(\phi^{p^{m,n+1}}-\phi^{p^n})
\end{align*}
where $\Delta t$ is the time step and the source term as well as the terms evaluated at the previous time level $n$ do not depend on the coupling iteration count as they are known quantities. The fixed stress constraint implies that $\boldsymbol{\sigma}^{m,n+1}$ gets replaced by $\boldsymbol{\sigma}^{m-1,n+1}$ i.e. the computation of $p^{m,n+1}$ and $\mathbf{z}^{m,n+1}$ is based on the value of stress updated after the poromechanics solve from the previous coupling iteration $m-1$ at the current time level $n+1$. The modified equation is written as
\begin{align}
\nonumber
&C(p^{m,n+1}-p^n)+\Delta t\nabla \cdot \mathbf{z}^{m,n+1}=\Delta t q^{n+1}-\frac{C}{3}\mathbf{B}:(\boldsymbol{\sigma}^{m,n+1}-\boldsymbol{\sigma}^n)-(\phi^{p^{m,n+1}}-\phi^{p^n})
\end{align}
As a result, the discrete weak form of \eqref{massagain1} is given by
\begin{align}
\nonumber
&C(p_h^{m,n+1}-p_h^n,\theta_h)_{\Omega}+\Delta t(\nabla \cdot \mathbf{z}_h^{m,n+1},\theta_h)_{\Omega}+(\phi^{p^{m,n+1}}-\phi^{p^n},\theta_h)_{\Omega}\\
\nonumber
&=\Delta t(q^{n+1},\theta_h)_{\Omega}-\frac{C}{3}(\mathbf{B}:(\boldsymbol{\sigma}^{m-1,n+1}-\boldsymbol{\sigma}^n),\theta_h)_{\Omega}
\end{align}
Replacing $m$ by $m+1$ and subtracting the two equations, we get 
\begin{align*}
&C(\delta^{(m)}p_h,\theta_h)_{\Omega}+\Delta t(\nabla \cdot \delta^{(m)}\mathbf{z}_h,\theta_h)_{\Omega}+(\delta^{(m)}\phi^p,\theta_h)_{\Omega}=-\frac{C}{3}(\mathbf{B}:\delta^{(m-1)}\boldsymbol{\sigma},\theta_h)_{\Omega}
\end{align*}
The weak form of the Darcy law \eqref{darcyagain1} for the $m^{th}$ coupling iteration in the $(n+1)^{th}$ time step is given by
\begin{align}
\label{wtwo1}
(\boldsymbol{\kappa}^{-1}\mathbf{z}^{m,n+1},\mathbf{v})_{\Omega}=-(\nabla p^{m,n+1},\mathbf{v})_{\Omega}+(\rho_0 \mathbf{g},\mathbf{v})_{\Omega}\,\, \forall\,\,\mathbf{v}\in \mathbf{V}(\Omega)
\end{align}
where $\mathbf{V}(\Omega)$ is given by
\begin{align*}
\mathbf{V}(\Omega)\equiv \mathbf{H}(div,\Omega)\cap \big\{\mathbf{v}:\mathbf{v}\cdot \mathbf{n}=0\,\,\mathrm{on}\,\,\Gamma_N^f\big\}
\end{align*}
and $\mathbf{H}(div,\Omega)$ is given by 
\begin{align*}
\mathbf{H}(div,\Omega)\equiv\big\{\mathbf{v}:\mathbf{v}\in (L^2(\Omega))^3,\nabla \cdot \mathbf{v}\in L^2(\Omega) \big\}
\end{align*}
We use the divergence theorem to evaluate the first term on RHS of \eqref{wtwo1} as follows
\begin{align}
\nonumber
&(\nabla p^{m,n+1},\mathbf{v})_{\Omega}=(\nabla,p^{m,n+1}\mathbf{v})_{\Omega}-(p^{m,n+1},\nabla \cdot \mathbf{v})_{\Omega}\\
\nonumber
&=(p^{m,n+1},\mathbf{v}\cdot \mathbf{n})_{\partial \Omega} -(p^{m,n+1},\nabla \cdot \mathbf{v})_{\Omega}\\
\label{wtwo2}
&=(g,\mathbf{v}\cdot \mathbf{n})_{\Gamma_D^f} -(p^{m,n+1},\nabla \cdot \mathbf{v})_{\Omega}
\end{align}
where we invoke $\mathbf{v}\cdot \mathbf{n}=0$ on $\Gamma_N^f$.
In lieu of \eqref{wtwo1} and \eqref{wtwo2}, we get
\begin{align*}
(\boldsymbol{\kappa}^{-1}\mathbf{z}^{m,n+1},\mathbf{v})_{\Omega}=-(g,\mathbf{v}\cdot \mathbf{n})_{\Gamma_D^f}+(p^{m,n+1},\nabla \cdot \mathbf{v})_{\Omega}+(\rho_0 \mathbf{g},\mathbf{v})_{\Omega}
\end{align*}
Replacing $m$ by $m+1$ and subtracting the two equations, we get
\begin{align}
\nonumber
&(\boldsymbol{\kappa}^{-1}\delta^{(m)} \mathbf{z}_h, \mathbf{v}_h)_{\Omega}=(\delta^{(m)} p_h,\nabla \cdot \mathbf{v}_h)_{\Omega}
\end{align} 
\subsection{Discrete variational statement for the poromechanics subproblem in terms of coupling iteration differences}\label{discretemechanics}
The weak form of the linear momentum balance \eqref{mechstart} is given by
\begin{align}
\label{app1}
(\nabla \cdot \boldsymbol{\sigma},\mathbf{q})_{\Omega}+(\mathbf{f}\cdot \mathbf{q})_{\Omega}=0\qquad (\forall\,\,\mathbf{q}\in \mathbf{U}(\Omega))
\end{align}
where $\mathbf{U}(\Omega)$ is given by
\begin{align*}
\mathbf{U}(\Omega)\equiv \big\{\mathbf{q}=(u,v,w):u,v,w\in H^1(\Omega),\mathbf{q}=\mathbf{0}\,\,\mathrm{on}\,\,\Gamma_D^p\big\}
\end{align*}
where $H^m(\Omega)$ is defined, in general, for any integer $m\geq 0$ as
\begin{align*}
H^m(\Omega)\equiv\big\{w:D^{\alpha}w\in L^2(\Omega)\,\,\forall |\alpha| \leq m \big\},
\end{align*}
where the derivatives are taken in the sense of distributions and given by
\begin{align*}
D^{\alpha}w=\frac{\partial^{|\alpha|}w}{\partial x_1^{\alpha_1}..\partial x_n^{\alpha_n}},\,\,|\alpha|=\alpha_1+\cdots+\alpha_n,
\end{align*} 
We know from tensor calculus that
\begin{align}
\label{app2}
(\nabla \cdot \boldsymbol{\sigma},\mathbf{q})_{\Omega}\equiv (\nabla ,\boldsymbol{\sigma}\mathbf{q})_{\Omega}-(\boldsymbol{\sigma}:\nabla \mathbf{q})_{\Omega}
\end{align}
Further, using the divergence theorem and the symmetry of $\boldsymbol{\sigma}$, we arrive at
\begin{align}
\label{app3}
(\nabla ,\boldsymbol{\sigma}\mathbf{q})_{\Omega}\equiv (\mathbf{q},\boldsymbol{\sigma}\mathbf{n})_{\partial \Omega}
\end{align}
We decompose $\nabla \mathbf{q}$ into a symmetric part $(\nabla \mathbf{q})_{s}\equiv \frac{1}{2}\big(\nabla \mathbf{q}+(\nabla \mathbf{q})^T\big)\equiv \boldsymbol{\epsilon}(\mathbf{q})$ and skew-symmetric part $(\nabla \mathbf{q})_{ss}$ and note that the contraction between a symmetric and skew-symmetric tensor is zero to obtain
From \eqref{app1}, \eqref{app2}, \eqref{app3} and \eqref{app4}, we get
\begin{align*}
&(\boldsymbol{\sigma}\mathbf{n},\mathbf{q})_{\partial \Omega} - (\boldsymbol{\sigma}:\boldsymbol{\epsilon}(\mathbf{q}))_{\Omega} + (\mathbf{f},\mathbf{q})_{\Omega}=0
\end{align*}
which, after invoking the traction boundary condition, results in the discrete weak form for the $m^{th}$ coupling iteration as
\begin{align}
\nonumber
&(\mathbf{t}^{n+1},\mathbf{q}_h)_{\Gamma_N^p} - (\boldsymbol{\sigma}^{m,n+1}:\boldsymbol{\epsilon}(\mathbf{q}_h))_{\Omega} + (\mathbf{f}^{n+1},\mathbf{q}_h)_{\Omega}=0
\end{align}
Replacing $m$ by $m+1$ and subtracting the two equations, we get
\begin{align}
\nonumber
&(\delta^{(m)} \boldsymbol{\sigma}:\boldsymbol{\epsilon}(\mathbf{q}_h))_{\Omega}=0
\end{align}
\subsection{Summary}
The discrete variational statements in terms of coupling iteration differences is : find $\delta^{(m)} p_h\in W_h$, $\delta^{(m)} \mathbf{z}_h\in \mathbf{V}_h$ and $\delta^{(m)} \mathbf{u}_h\in \mathbf{U}_h$ such that
\begin{align}
\label{msone}
&C(\delta^{(m)}p_h,\theta_h)_{\Omega}+\Delta t(\nabla \cdot \delta^{(m)}\mathbf{z}_h,\theta_h)_{\Omega}+(\delta^{(m)}\phi^p,\theta_h)_{\Omega}=-\frac{C}{3}(\mathbf{B}:\delta^{(m-1)}\boldsymbol{\sigma},\theta_h)_{\Omega}\\
\label{mstwo}
&(\boldsymbol{\kappa}^{-1}\delta^{(m)} \mathbf{z}_h, \mathbf{v}_h)_{\Omega}=(\delta^{(m)} p_h,\nabla \cdot \mathbf{v}_h)_{\Omega}\\
\label{msthree}
&(\delta^{(m)} \boldsymbol{\sigma}:\boldsymbol{\epsilon}(\mathbf{q}_h))_{\Omega}=0
\end{align}
where the finite dimensional spaces $W_h$, $\mathbf{V}_h$ and $\mathbf{U}_h$ are
\begin{align*}
&W_h= \big\{\theta_h:\theta_h\vert\in P_0(E)\,\,\forall E\in \mathscr{T}_h\big\}\\
&\mathbf{V}_h=\big\{\mathbf{v}_h:\mathbf{v}_h\vert_E\leftrightarrow \hat{\mathbf{v}}\vert_{\hat{E}}\in \hat{\mathbf{V}}(\hat{E})\,\,\forall E\in \mathscr{T}_h,\,\,\mathbf{v}_h \cdot \mathbf{n}=0\,\,\mathrm{on}\,\,\Gamma_N^f\big\}\\
&\mathbf{U}_h=\big\{\mathbf{q}_h=(u,v,w)\vert_E\in Q_1(E)\,\,\forall E\in \mathscr{T}_h,\,\,\mathbf{q}_h=\mathbf{0}\,\,\mathrm{on}\,\,\Gamma_D^p\big\}
\end{align*}   
where $P_0$ represents the space of constants, $Q_1$ represents the space of trilinears and the details of $\hat{\mathbf{V}}(\hat{E})$ are given in \citet{dana-2018}. 
\begin{theorem}\label{map}
The fixed stress split iterative coupling scheme for anisotropic poroelasticity with Biot tensor in which the flow problem is solved first by freezing all components of the stress tensor is a contraction given by 
\begin{align*}
\nonumber
&\frac{C}{6}\Vert \mathbf{B}:\delta^{(m)}\boldsymbol{\sigma}\Vert_{\Omega}^2+\overbrace{\frac{C}{2}\Vert\delta^{(m)}p_h\Vert_{\Omega}^2}^{>0}+\overbrace{\Delta t
\Vert\boldsymbol{\kappa}^{-1/2}\delta^{(m)} \mathbf{z}_h \Vert^2_{\Omega}}^{>0}\\
\nonumber
&+\overbrace{(\delta^{(m)} \boldsymbol{\sigma}:\mathbb{D}^{ep^{-1}}\delta^{(m)}\boldsymbol{\sigma})_{\Omega}}^{\geq 0?}+\overbrace{\frac{1}{2C}\Vert \delta^{(m)}\zeta\Vert_{\Omega}^2}^{>0}+\overbrace{\frac{1}{C}\Vert \delta^{(m)}\phi^p-\delta^{(m)}_f\phi^p\Vert_{\Omega}^2}^{\geq 0}\\
\nonumber
&-\bigg[\overbrace{\frac{1}{C}\Vert \delta^{(m)}\zeta-\delta^{(m)}_f\zeta\Vert_{\Omega}^2+\frac{1}{2C}\Vert \delta^{(m)}\phi^p\Vert_{\Omega}^2+\frac{1}{3}(\mathbf{B}:\delta^{(m)}\boldsymbol{\sigma},\delta_f^{(m)}\phi^p)_{\Omega}}^{\mathrm{driven\,\,to\,\,zero\,\,by\,\,convergence\,\,criterion}}\bigg]\\
\nonumber
&\leq \frac{C}{6}\Vert \mathbf{B}:\delta^{(m-1)}\boldsymbol{\sigma}\Vert_{\Omega}^2
\end{align*}
\end{theorem}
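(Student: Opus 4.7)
The strategy is the standard one for fixed stress split schemes: test the three variational statements with the natural error test functions, combine them using the inverse constitutive law and the decomposition of $\zeta$, and close with Young's inequality on the single cross term that couples successive iterates.

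Concretely, I would first choose $\theta_h=\delta^{(m)}p_h$ in \eqref{msone} and $\mathbf{v}_h=\delta^{(m)}\mathbf{z}_h$ in \eqref{mstwo}, and substitute the second into the first to eliminate $(\nabla\cdot\delta^{(m)}\mathbf{z}_h,\delta^{(m)}p_h)_{\Omega}$. This produces
\begin{align*}
C\|\delta^{(m)}p_h\|_{\Omega}^2+\Delta t\|\boldsymbol{\kappa}^{-1/2}\delta^{(m)}\mathbf{z}_h\|_{\Omega}^2+(\delta^{(m)}\phi^p,\delta^{(m)}p_h)_{\Omega}=-\tfrac{C}{3}(\mathbf{B}:\delta^{(m-1)}\boldsymbol{\sigma},\delta^{(m)}p_h)_{\Omega}.
\end{align*}
Next I would choose $\mathbf{q}_h=\delta^{(m)}\mathbf{u}_h$ in \eqref{msthree} and invoke the inverse generalized Hooke's law \eqref{invconstitutive} in the form $\boldsymbol{\epsilon}(\delta^{(m)}\mathbf{u}_h)=\mathbb{D}^{ep^{-1}}\delta^{(m)}\boldsymbol{\sigma}+\tfrac{C}{3}\mathbf{B}\,\delta^{(m)}p_h$ to convert \eqref{msthree} into the identity
\begin{align*}
(\delta^{(m)}\boldsymbol{\sigma}:\mathbb{D}^{ep^{-1}}\delta^{(m)}\boldsymbol{\sigma})_{\Omega}+\tfrac{C}{3}(\mathbf{B}:\delta^{(m)}\boldsymbol{\sigma},\delta^{(m)}p_h)_{\Omega}=0,
\end{align*}
which trades the stress--pressure coupling for the $\mathbb{D}^{ep^{-1}}$ quadratic form that sits on the LHS of the target estimate.

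The next step is to apply the polar identity $-ab=\tfrac12(a-b)^2-\tfrac12 a^2-\tfrac12 b^2$ (equivalently, Young's inequality with unit weight) to $-\tfrac{C}{3}(\mathbf{B}:\delta^{(m-1)}\boldsymbol{\sigma},\delta^{(m)}p_h)_{\Omega}$ after rewriting $C\delta^{(m)}p_h$ through \eqref{fluidcontent} as $\delta^{(m)}\zeta-\tfrac{C}{3}\mathbf{B}:\delta^{(m)}\boldsymbol{\sigma}-\delta^{(m)}\phi^p$. This is where the contraction factor $\tfrac{C}{6}\|\mathbf{B}:\delta^{(m-1)}\boldsymbol{\sigma}\|_{\Omega}^2$ should emerge on the right, with a matching $\tfrac{C}{6}\|\mathbf{B}:\delta^{(m)}\boldsymbol{\sigma}\|_{\Omega}^2$ surviving on the left. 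The $\tfrac{1}{2C}\|\delta^{(m)}\zeta\|_{\Omega}^2$ contribution and the $\tfrac{1}{C}\|\delta^{(m)}\phi^p-\delta^{(m)}_f\phi^p\|_{\Omega}^2$ contribution then arise by further splitting $\delta^{(m)}\zeta=\delta^{(m)}_f\zeta+(\delta^{(m)}\zeta-\delta^{(m)}_f\zeta)$ and $\delta^{(m)}\phi^p=\delta^{(m)}_f\phi^p+(\delta^{(m)}\phi^p-\delta^{(m)}_f\phi^p)$, which isolates exactly those residual cross terms that vanish by the coupling convergence criterion and therefore appear bracketed on the LHS.

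The main obstacle will be the elastoplastic tangent term $(\delta^{(m)}\boldsymbol{\sigma}:\mathbb{D}^{ep^{-1}}\delta^{(m)}\boldsymbol{\sigma})_{\Omega}$: in pure elasticity $\mathbb{D}^{-1}$ is symmetric positive definite and this term is unambiguously nonnegative, but the plastic tangent $\mathbb{D}^{ep^{-1}}$ inherits the ambiguous sign flagged by the ``$\geq 0?$'' in the theorem, and its positivity depends on the specific yield surface and flow rule; if positive semidefiniteness cannot be established in general, additional assumptions (for instance, associative plasticity with a convex smooth yield function, which makes $\mathbb{D}^{ep}$ symmetric and non-negatively definite on the plastic cone) will be needed to carry it with a favourable sign. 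A secondary difficulty is bookkeeping: plastic porosity updates straddle both the flow and mechanics subsolves, forcing the careful splitting between $\delta^{(m)}\phi^p$ and $\delta^{(m)}_f\phi^p$ that ultimately populates both the positive block and the convergence-residual block on the LHS.
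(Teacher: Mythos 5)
Your proposal matches the paper's proof in every essential: the same test functions ($\theta_h=\delta^{(m)}p_h$, $\mathbf{v}_h=\delta^{(m)}\mathbf{z}_h$, $\mathbf{q}_h=\delta^{(m)}\mathbf{u}_h$), the same use of \eqref{invconstitutive} to produce the $\mathbb{D}^{ep^{-1}}$ quadratic form alongside the $\frac{C}{3}(\mathbf{B}:\delta^{(m)}\boldsymbol{\sigma},\delta^{(m)}p_h)_{\Omega}$ cross term, the same substitution of the fluid-content identity \eqref{fluidcontent} to trade that cross term for $\frac{1}{2C}\Vert\delta^{(m)}\zeta\Vert_{\Omega}^2$ and companions, the same fixed-stress split of $\delta^{(m)}\zeta$ and $\delta^{(m)}\phi^p$ to isolate the convergence-residual bracket, and the same Young's inequality on the $(m-1)$-indexed cross term to produce the $\frac{C}{6}\Vert\mathbf{B}:\delta^{(m-1)}\boldsymbol{\sigma}\Vert_{\Omega}^2$ bound. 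The two caveats you raise---the undetermined sign of $(\delta^{(m)}\boldsymbol{\sigma}:\mathbb{D}^{ep^{-1}}\delta^{(m)}\boldsymbol{\sigma})_{\Omega}$ and the plastic-porosity bookkeeping---are exactly the items the paper itself leaves open in its closing ``On the agenda'' step.
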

\begin{proof}
$\bullet$ \textbf{Step 1: Flow equations}\\ 
Testing \eqref{msone} with $\theta_h\equiv \delta^{(m)} p_h$, we get
\begin{align}
\nonumber
&C\Vert\delta^{(m)}p_h\Vert_{\Omega}^2+\Delta t(\nabla \cdot \delta^{(m)}\mathbf{z}_h,\delta^{(m)} p_h)_{\Omega}+(\delta^{(m)}\phi^p,\delta^{(m)} p_h)_{\Omega}\\
\label{msfour}
&=-\frac{C}{3}(\mathbf{B}:\delta^{(m-1)}\boldsymbol{\sigma},\delta^{(m)} p_h)_{\Omega}
\end{align}
Testing \eqref{mstwo} with $\mathbf{v}_h\equiv \delta^{(m)} \mathbf{z}_h$, we get
\begin{align}
\label{msfive}
&\Vert\boldsymbol{\kappa}^{-1/2}\delta^{(m)} \mathbf{z}_h \Vert^2_{\Omega}=(\delta^{(m)} p_h,\nabla \cdot \delta^{(m)} \mathbf{z}_h)_{\Omega}
\end{align}
From \eqref{msfour} and \eqref{msfive}, we get
\begin{align}
\label{mssix}
&C\Vert\delta^{(m)}p_h\Vert_{\Omega}^2+\Delta t
\Vert\boldsymbol{\kappa}^{-1/2}\delta^{(m)} \mathbf{z}_h \Vert^2_{\Omega}+(\delta^{(m)}\phi^p,\delta^{(m)} p_h)_{\Omega}=-\frac{C}{3}(\mathbf{B}:\delta^{(m-1)}\boldsymbol{\sigma},\delta^{(m)} p_h)_{\Omega}
\end{align}
$\bullet$ \textbf{Step 2: Poromechanics equations}\\
Testing \eqref{msthree} with $\mathbf{q}_h\equiv \delta^{(m)}\mathbf{u}_h$, we get
\begin{align}
\label{eq1}
&(\delta^{(m)} \boldsymbol{\sigma}:\delta^{(m)}\boldsymbol{\epsilon})_{\Omega}=0
\end{align}
We now invoke \eqref{invconstitutive} to arrive at the expression for change in strain tensor over the $(m+1)^{th}$ coupling iteration as follows
\begin{align}
\label{eq2}
\delta^{(m)}\boldsymbol{\epsilon}=\mathbb{D}^{ep^{-1}}\delta^{(m)}\boldsymbol{\sigma}+\frac{C}{3}\mathbf{B}\delta^{(m)}p_h
\end{align}
Substituting \eqref{eq2} in \eqref{eq1}, we get
\begin{align}
\label{msseven}
&(\delta^{(m)} \boldsymbol{\sigma}:\mathbb{D}^{ep^{-1}}\delta^{(m)}\boldsymbol{\sigma})_{\Omega}+\frac{C}{3}(\mathbf{B}:\delta^{(m)} \boldsymbol{\sigma},\delta^{(m)}p_h)_{\Omega}=0
\end{align}
$\bullet$ \textbf{Step 3: Combining flow and poromechanics equations}\\
Adding \eqref{mssix} and \eqref{msseven}, we get
\begin{align}
\nonumber
&C\Vert\delta^{(m)}p_h\Vert_{\Omega}^2+\Delta t
\Vert\boldsymbol{\kappa}^{-1/2}\delta^{(m)} \mathbf{z}_h \Vert^2_{\Omega}+(\delta^{(m)}\phi^p,\delta^{(m)} p_h)_{\Omega}+(\delta^{(m)} \boldsymbol{\sigma}:\mathbb{D}^{ep^{-1}}\delta^{(m)}\boldsymbol{\sigma})_{\Omega}\\
\label{use1}
&+\frac{C}{3}(\mathbf{B}:\delta^{(m)} \boldsymbol{\sigma},\delta^{(m)}p_h)_{\Omega}=-\frac{C}{3}(\mathbf{B}:\delta^{(m-1)}\boldsymbol{\sigma},\delta^{(m)} p_h)_{\Omega}
\end{align}
$\bullet$ \textbf{Step 4: Variation in fluid content}\\
In lieu of \eqref{fluidcontent}, the variation in fluid content in the $(m+1)^{th}$ coupling iteration is
\begin{align}
\label{fluidcontentfull}
&\delta^{(m)}\zeta=C\delta^{(m)}p_h+\frac{C}{3}\mathbf{B}:\delta^{(m)}\boldsymbol{\sigma}+\delta^{(m)}\phi^p
\end{align}
As a result, we can write
\begin{align}
\nonumber
&\frac{1}{2C}\Vert \delta^{(m)}\zeta\Vert_{\Omega}^2-\frac{C}{2}\Vert\delta^{(m)}p_h\Vert_{\Omega}^2-\frac{C}{18}\Vert \mathbf{B}:\delta^{(m)}\boldsymbol{\sigma}\Vert_{\Omega}^2-\frac{1}{2C}\Vert \delta^{(m)}\phi^p\Vert_{\Omega}^2\\
\label{daal}
&-(\delta^{(m)}\phi^p,\delta^{(m)} p_h)_{\Omega}-\frac{1}{3}(\mathbf{B}:\delta^{(m)}\boldsymbol{\sigma},\delta^{(m)}\phi^p)_{\Omega}=\frac{C}{3}(\mathbf{B}:\delta^{(m)} \boldsymbol{\sigma},\delta^{(m)}p_h)_{\Omega}
\end{align}
From \eqref{use1} and \eqref{daal}, we get
\begin{align}
\nonumber
&C\Vert\delta^{(m)}p_h\Vert_{\Omega}^2+\Delta t
\Vert\boldsymbol{\kappa}^{-1/2}\delta^{(m)} \mathbf{z}_h \Vert^2_{\Omega}+(\delta^{(m)} \boldsymbol{\sigma}:\mathbb{D}^{ep^{-1}}\delta^{(m)}\boldsymbol{\sigma})_{\Omega}+\frac{1}{2C}\Vert \delta^{(m)}\zeta\Vert_{\Omega}^2\\
\nonumber
&-\frac{C}{2}\Vert\delta^{(m)}p_h\Vert_{\Omega}^2-\frac{C}{18}\Vert \mathbf{B}:\delta^{(m)}\boldsymbol{\sigma}\Vert_{\Omega}^2-\frac{1}{2C}\Vert \delta^{(m)}\phi^p\Vert_{\Omega}^2-\frac{1}{3}(\mathbf{B}:\delta^{(m)}\boldsymbol{\sigma},\delta^{(m)}\phi^p)_{\Omega}\\
\label{use2}
&=-\frac{C}{3}(\mathbf{B}:\delta^{(m-1)}\boldsymbol{\sigma},\delta^{(m)} p_h)_{\Omega}
\end{align}
Adding and subtracting $\frac{C}{6}\Vert \mathbf{B}:\delta^{(m)}\boldsymbol{\sigma}\Vert_{\Omega}^2$ to the LHS of \eqref{use2} results in
\begin{align}
\nonumber
&\frac{C}{6}\Vert \mathbf{B}:\delta^{(m)}\boldsymbol{\sigma}\Vert_{\Omega}^2+\frac{C}{2}\Vert\delta^{(m)}p_h\Vert_{\Omega}^2+\Delta t
\Vert\boldsymbol{\kappa}^{-1/2}\delta^{(m)} \mathbf{z}_h \Vert^2_{\Omega}+(\delta^{(m)} \boldsymbol{\sigma}:\mathbb{D}^{ep^{-1}}\delta^{(m)}\boldsymbol{\sigma})_{\Omega}\\
\nonumber
&+\frac{1}{2C}\Vert \delta^{(m)}\zeta\Vert_{\Omega}^2-\frac{C}{9}\Vert \mathbf{B}:\delta^{(m)}\boldsymbol{\sigma}\Vert_{\Omega}^2-\frac{1}{2C}\Vert \delta^{(m)}\phi^p\Vert_{\Omega}^2-\frac{1}{3}(\mathbf{B}:\delta^{(m)}\boldsymbol{\sigma},\delta^{(m)}\phi^p)_{\Omega}\\
\label{use3}
&=-\frac{C}{3}(\mathbf{B}:\delta^{(m-1)}\boldsymbol{\sigma},\delta^{(m)} p_h)_{\Omega}
\end{align}
In lieu of \eqref{fluidcontent} and the fixed stress constraint during the flow solve, the variation in fluid content during the flow solve in the $(m+1)^{th}$ coupling iteration is given by
\begin{align*}
&\delta^{(m)}_f\zeta=C\delta_f^{(m)}p_h+\frac{C}{3}\mathbf{B}:\cancelto{\mathbf{0}}{\delta^{(m)}_f\boldsymbol{\sigma}}+\delta_f^{(m)}\phi^p
\end{align*}
Further, since the pore pressure is frozen during the poromechanical solve, we have $\delta^{(m)}_fp_h=\delta^{(m)}p_h$. As a result, we can write
\begin{align}
\label{fluidcontentflow}
&\delta^{(m)}_f\zeta=C\delta^{(m)}p_h+\delta_f^{(m)}\phi^p
\end{align}
Subtracting \eqref{fluidcontentflow} from \eqref{fluidcontentfull}, we can write
\begin{align}
\nonumber
&\delta^{(m)}\zeta-\delta^{(m)}_f\zeta=\frac{C}{3}\mathbf{B}:\delta^{(m)}\boldsymbol{\sigma}+\delta^{(m)}\phi^p-\delta^{(m)}_f\phi^p
\end{align}
which implies that
\begin{align}
\nonumber
&\frac{1}{C}\Vert \delta^{(m)}\zeta-\delta^{(m)}_f\zeta\Vert_{\Omega}^2-\frac{1}{C}\Vert \delta^{(m)}\phi^p-\delta^{(m)}_f\phi^p\Vert_{\Omega}^2\\
\label{convergence}
&-\frac{1}{3}(\mathbf{B}:\delta^{(m)}\boldsymbol{\sigma},(\delta^{(m)}\phi^p-\delta_f^{(m)}\phi^p))_{\Omega}=\frac{C}{9}\Vert \mathbf{B}:\delta^{(m)}\boldsymbol{\sigma}\Vert_{\Omega}^2
\end{align}
In lieu of \eqref{convergence}, we can write \eqref{use3} as
\begin{align}
\nonumber
&\frac{C}{6}\Vert \mathbf{B}:\delta^{(m)}\boldsymbol{\sigma}\Vert_{\Omega}^2+\overbrace{\frac{C}{2}\Vert\delta^{(m)}p_h\Vert_{\Omega}^2}^{\geq 0}+\overbrace{\Delta t
\Vert\boldsymbol{\kappa}^{-1/2}\delta^{(m)} \mathbf{z}_h \Vert^2_{\Omega}}^{\geq 0}\\
\nonumber
&+\overbrace{(\delta^{(m)} \boldsymbol{\sigma}:\mathbb{D}^{ep^{-1}}\delta^{(m)}\boldsymbol{\sigma})_{\Omega}}^{\geq 0?}+\overbrace{\frac{1}{2C}\Vert \delta^{(m)}\zeta\Vert_{\Omega}^2}^{\geq 0}+\overbrace{\frac{1}{C}\Vert \delta^{(m)}\phi^p-\delta^{(m)}_f\phi^p\Vert_{\Omega}^2}^{\geq 0}\\
\nonumber
&-\bigg[\overbrace{\frac{1}{C}\Vert \delta^{(m)}\zeta-\delta^{(m)}_f\zeta\Vert_{\Omega}^2+\frac{1}{2C}\Vert \delta^{(m)}\phi^p\Vert_{\Omega}^2+\frac{1}{3}(\mathbf{B}:\delta^{(m)}\boldsymbol{\sigma},\delta_f^{(m)}\phi^p)_{\Omega}}^{\mathrm{driven\,\,to\,\,zero\,\,by\,\,convergence\,\,criterion}}\bigg]\\
\label{mseight}
&=-\frac{C}{3}(\mathbf{B}:\delta^{(m-1)}\boldsymbol{\sigma},\delta^{(m)} p_h)_{\Omega}
\end{align}
\\
$\bullet$ \textbf{Step 5: Invoking the Young's inequality}\\
Since the sum of the terms on the LHS of \eqref{mseight} is nonnegative, the RHS is also nonnegative. We invoke the Young's inequality (see \citet{steele})
for the RHS of \eqref{mseight} as follows
\begin{align}
\label{young}
&-\frac{C}{3}(\mathbf{B}:\delta^{(m-1)}\boldsymbol{\sigma},\delta^{(m)} p_h)_{\Omega}\leq \frac{C}{3}\bigg(\frac{1}{2} \Vert \mathbf{B}:\delta^{(m-1)}\boldsymbol{\sigma}\Vert_{\Omega}^2+\frac{1}{2}\Vert\delta^{(m)} p_h \Vert_{\Omega}^2\bigg)
\end{align}
In lieu of \eqref{young}, we write \eqref{mseight} as
\begin{align}
\nonumber
&\frac{C}{6}\Vert \mathbf{B}:\delta^{(m)}\boldsymbol{\sigma}\Vert_{\Omega}^2+\overbrace{\frac{C}{2}\Vert\delta^{(m)}p_h\Vert_{\Omega}^2}^{>0}+\overbrace{\Delta t
\Vert\boldsymbol{\kappa}^{-1/2}\delta^{(m)} \mathbf{z}_h \Vert^2_{\Omega}}^{>0}\\
\nonumber
&+\overbrace{(\delta^{(m)} \boldsymbol{\sigma}:\mathbb{D}^{ep^{-1}}\delta^{(m)}\boldsymbol{\sigma})_{\Omega}}^{\geq 0?}+\overbrace{\frac{1}{2C}\Vert \delta^{(m)}\zeta\Vert_{\Omega}^2}^{>0}+\overbrace{\frac{1}{C}\Vert \delta^{(m)}\phi^p-\delta^{(m)}_f\phi^p\Vert_{\Omega}^2}^{\geq 0}\\
\nonumber
&-\bigg[\overbrace{\frac{1}{C}\Vert \delta^{(m)}\zeta-\delta^{(m)}_f\zeta\Vert_{\Omega}^2+\frac{1}{2C}\Vert \delta^{(m)}\phi^p\Vert_{\Omega}^2+\frac{1}{3}(\mathbf{B}:\delta^{(m)}\boldsymbol{\sigma},\delta_f^{(m)}\phi^p)_{\Omega}}^{\mathrm{driven\,\,to\,\,zero\,\,by\,\,convergence\,\,criterion}}\bigg]\\
\nonumber
&\leq \frac{C}{6}\Vert \mathbf{B}:\delta^{(m-1)}\boldsymbol{\sigma}\Vert_{\Omega}^2
\end{align}
\\
$\bullet$ \textbf{Step 6: On the agenda}
\begin{compactitem}
\item To render a complete statement of contraction, we need to arrive at estimates for the term $(\delta^{(m)} \boldsymbol{\sigma}:\mathbb{D}^{ep^{-1}}\delta^{(m)}\boldsymbol{\sigma})_{\Omega}$. In lieu of that, we need to arrive at the inverse of the elastoplastic tangent operator $\mathbb{D}^{ep}$. The exact expression for $\mathbb{D}^{ep}$ shall depend on the type of plasticity model (associative or non-associative) and the yield criterion being employed. As we know the expression for $\mathbb{D}^{ep}$, we intend to invert it by using the Sherman-Morrison formula (see \citet{jac-1950}). 
\item In addition, the term $\frac{1}{C}\Vert \delta^{(m)}\zeta-\delta^{(m)}_f\zeta\Vert_{\Omega}^2+\frac{1}{2C}\Vert \delta^{(m)}\phi^p\Vert_{\Omega}^2+\frac{1}{3}(\mathbf{B}:\delta^{(m)}\boldsymbol{\sigma},\delta_f^{(m)}\phi^p)_{\Omega}$ needs to be driven to zero to achieve optimal convergence rate. We intend to do that by designing the convergence criterion as that term going to a small positive value. But before we do that, we need to arrive at an expression for the plastic porosity $\phi^p$ by using certain hypotheses given in \citet{coussy}).
\end{compactitem}
\end{proof}
\bibliographystyle{plainnat}
\bibliography{diss}
\end{document}